\newtheorem{theorem}{Theorem}%[section]
\newtheorem{thm}{Theorem}%[section]
\newtheorem{obs}[thm]{Observation}
\theoremstyle{definition}
\newtheorem{definition}[thm]{Definition}
\newtheorem{theorem}{Theorem}%[section]
\newtheorem{thm}[theorem]{Theorem}%[section]
\newtheorem{oprb}[theorem]{Open Problem}
\newtheorem*{oprb*}{Open Problem}
\newtheorem{conj}[theorem]{Conjecture}
\theoremstyle{definition}
\newtheorem{definition}[theorem]{Definition}
\theoremstyle{definition}
\newtheorem*{construction*}{Construction}
\newtheorem{construction}[theorem]{Construction}
\def\lcm{\mathrm{lcm}}
\begin{document}

\textwidth4.5true in
\textheight7.2true in

\title
{\Large \sc \bf {Supermagic labeling of $C_n\Box C_m$}}
\date{}
\author{{{Dalibor Froncek, University of Minnesota Duluth}}\\
%{\footnotesize Department of Mathematics and Statistics}\\
%{\footnotesize University of Minnesota Duluth}\\
%{\footnotesize 1117 University Drive}\\
%{\footnotesize Duluth, MN 55812-3000, U.S.A.}\\
%{\footnotesize e-mail: {\it dalibor@d.umn.edu}}\\
}
%\end{author}
\maketitle

\begin{abstract}
A {supermagic labeling} (often also called supermagic labeling) of a graph $G(V,E)$ with $|E|=k$ is a bijection from $E$ to the set of first $k$ positive integers such that the sum of labels of all incident edges of every vertex $x\in V$ is equal to the same integer $c$. An existence of a supermagic labeling of Cartesian product of two cycles, $C_{n}\Box C_m$ for $n,m\geq4$ and both $n,m$ even and for any $C_n\Box C_n$ with $n\geq3$ was proved by 
Ivan\v{c}o~\cite{ivanco}. Ivan\v co also conjectured that such labeling is possible for any $C_n\Box C_m$ with $n,m\geq3$. We prove his conjecture for $n,m$ odd except when they are relatively prime.
\end{abstract}

\noindent
\textbf{Keywords:}  Magic-type labeling, vertex-magic labeling, supermagic labeling, group edge labeling, Cartesian product of cycles

\noindent
\textbf{2000 Mathematics Subject Classification:} 05C78

%% main text
\section{Introduction}

A {\em labeling} of a graph $G$ with vertex set $V$ of order $p$ and edge set $E$ of order $q$ is a bijection $f$ from $V, E,$ or $V\cup E$ to the set $\{1,2,\dots,s\}$ where $s=p,q$, or $p+q$, respectively, assigning to each element $a$ of $V, E,$ or $V\cup E$ its {\em label} $f(a)$.

The {\em weight} of an element is defined as the sum of labels of all adjacent or incident elements (or both), sometimes including the label of the element itself.

A {\em magic type} labeling is one in which the  weight of every vertex or edge of $G$ is equal to the same integer $c$, called the {\em magic constant}. Recently many authors studied also labelings using Abelian groups rather than sets of consecutive integers. Such labelings are then usually called {\em $\Gamma$-labelings}.

Probably the longest studied magic type labeling is the {\emph{supermagic labeling}}, also called the {\em vertex-magic edge labeling}. To avoid confusion with other labelings that have also been called supermagic by some authors, we use the latter term.

\begin{definition}\label{def:VMEL}
	Let $G$ be a graph with vertex set $V$ and edge set $E$ of order $q$, and $f$ a bijection from $E$ to the set of integers $\{1,2,\dots, q\}$, called {\em labels}. Define the {\em weight} $w(x)$ of a vertex $x\in V$ as the sum of labels of all edges incident with $x$. It the weight os every vertex is the same, that is, there is a positive integer $c$ called the {\em magic constant} such that 
	$$
	w(x)=\sum_{y:xy\in E} f(xy)=c
	$$
	for every $x\in V$, then the mapping $f$ is called a {\em supermagic labeling} or {\em supermagic labeling} of graph $G$.
\end{definition}

There are too many papers studying the supermagic labelings of graphs to be listed here. For a comprehensive overview, we refer the interested reader to Gallian's survey~\cite{Gal}. Vertex-transitive graphs are some of most interesting classes in this context. Ivan\v{c}o studied {Cartesian products} of two cycles.

\begin{definition}\label{def:cart-product}
	The {\em Cartesian  product} $G=G_{1}\Box G_{2}$ of graphs $G_{1}$ and $G_{2}$ with disjoint vertex and edge sets $V_{1}$, $V_{2}$, and $E_{1}$, $E_{2}$ respectively, is the graph with vertex set $V=V_{1} \times V_{2}$ where any two vertices $u=(u_{1},u_{2})\in G$ and $v=(v_{1},v_{2})\in G$ are adjacent in $G$ if and only if either $u_{1}=v_{1}$ and $u_{2}$ is adjacent with $v_{2}$ in $G_{2}$ or, $u_{2}=v_{2}$ and $u_{1}$ is adjacent with $v_{1}$ in $G_{1}$.
\end{definition}

In~\cite{ivanco}, Ivan\v co proved the following results.

\begin{theorem}\label{thm:ivanco-C_n-C_n}
	The Cartesian product $C_{n} \Box C_{n}$ has a supermagic labeling for any $n\geq 3$.
\end{theorem}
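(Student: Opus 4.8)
The plan is to first record the parameters forced on any such labeling and then to exploit the extra diagonal symmetry that is available precisely because both factors have the same length $n$. Since $C_n\Box C_n$ is $4$-regular on $p=n^2$ vertices, it has $q=2n^2$ edges, and summing the weight identity over all vertices counts each edge label twice, so that
\[
n^2 c=2\sum_{e}f(e)=2\cdot\tfrac{q(q+1)}{2}=2n^2(2n^2+1).
\]
Hence the only possible magic constant is $c=4n^2+2$, and the goal is to produce a bijection onto $\{1,\dots,2n^2\}$ realizing it.

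Label the vertices by $(i,j)\in\mathbb{Z}_n\times\mathbb{Z}_n$, write $h(i,j)$ for the horizontal edge joining $(i,j)$ and $(i+1,j)$ and $v(i,j)$ for the vertical edge joining $(i,j)$ and $(i,j+1)$, so that the weight condition at $(i,j)$ reads
\[
w(i,j)=f(h(i-1,j))+f(h(i,j))+f(v(i,j-1))+f(v(i,j))=4n^2+2 .
\]
The key structural observation is that the diagonal reflection $\rho(i,j)=(j,i)$ is an automorphism of $C_n\Box C_n$ carrying $h(a,b)$ to $v(b,a)$, hence interchanging the horizontal and vertical edge sets; this symmetry exists only because both factors are $C_n$, which is exactly why the square case is more tractable than the general one. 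I would use it to reduce the whole problem to labeling only the horizontal edges. Concretely, choose a bijection $g$ from the $n^2$ horizontal edges onto $\{1,\dots,n^2\}$, write $g_{a,b}=g(h(a,b))$, and define the vertical labels by transporting the complementary values along $\rho$, namely $f(v(i,j))=2n^2+1-g_{j,i}$. This makes $f$ a bijection onto $\{1,\dots,2n^2\}$ automatically, since the horizontal labels fill $\{1,\dots,n^2\}$ and the vertical labels fill $\{n^2+1,\dots,2n^2\}$. Substituting into the weight condition, the two shifts $2n^2+1$ combine to $4n^2+2$, and the labeling is supermagic if and only if
\[
g_{i-1,j}+g_{i,j}=g_{j-1,i}+g_{j,i}\qquad\text{for all }i,j\in\mathbb{Z}_n ,
\]
that is, if and only if the consecutive pair-sum array $S_{i,j}=g_{i-1,j}+g_{i,j}$ is symmetric.

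It then remains to construct such a $g$. I would prescribe a symmetric target matrix $S$ (for instance built from a symmetric expression such as $\phi(i)+\phi(j)$) and recover the labels in each row from the linear recurrence $g_{i,j}=S_{i,j}-g_{i-1,j}$ around the cycle $C_n$, the free starting value $g_{0,j}$ being fixed by closing up the cycle. The recurrence expresses the entries of a row as signed partial alternating sums of $S$, and the task is to tune $S$ so that the resulting $n^2$ values form exactly a permutation of $\{1,\dots,n^2\}$. The main obstacle is precisely this bijectivity requirement together with the cyclic consistency of the recurrence: closing a cycle of length $n$ forces the alternating sum $\sum_i(-1)^i S_{i,j}$ to vanish when $n$ is even, whereas for odd $n$ the value $g_{0,j}$ is uniquely determined. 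This is the point at which the parity of $n$ enters and is the step I expect to demand the most care. I would finish by checking that the proposed symmetric array yields distinct entries within each row and distinct values across rows, thereby confirming that $g$ is a bijection and hence that $f$ is a supermagic labeling with constant $4n^2+2$.
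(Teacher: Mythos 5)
Your reduction is correct as far as it goes: the magic constant is indeed forced to be $4n^2+2$, the transpose-complementary assignment $f(v(i,j))=2n^2+1-g_{j,i}$ automatically makes $f$ a bijection onto $\{1,\dots,2n^2\}$, and the supermagic condition is exactly the symmetry $g_{i-1,j}+g_{i,j}=g_{j-1,i}+g_{j,i}$ of the pair-sum array $S$. But the argument stops precisely where the theorem begins. You never exhibit, for a single $n$ (let alone all $n\geq3$), a bijection $g:\mathbb{Z}_n\times\mathbb{Z}_n\to\{1,\dots,n^2\}$ with symmetric $S$; the last paragraph is a plan (``I would prescribe\dots'', ``I would finish by checking\dots''), and the bijectivity issue you yourself flag as the step demanding the most care is the entire content of the statement. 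The preceding work only transfers the difficulty, intact, from $f$ to $g$.

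Worse, the one concrete ansatz you suggest, $S_{i,j}=\phi(i)+\phi(j)$, provably cannot work for odd $n$. For odd $n$ the cyclic recurrence $x_i+x_{i-1}=\psi(i)$ has a unique solution, so writing $g_{i,j}=c_i+\phi(j)/2$, where $(c_i)$ is the unique solution of $c_{i-1}+c_i=\phi(i)$, is forced. Thus every ``column offset'' $\phi(j)/2=(c_{j-1}+c_j)/2$ is an average of two ``row offsets,'' and already for $n=3$ this is fatal: the nine values $c_i+(c_{j-1}+c_j)/2$ would have to be nine consecutive integers, which forces $\{c_i\}$ and $\{-c_k/2\}$ to be arithmetic progressions with differences $1$ and $3$ in some order, giving the same set $\{c_i\}$ two different diameters --- a contradiction. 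The target of your reduction is nevertheless achievable; for example, for $n=3$ the matrix with rows $(2,8,7)$, $(9,5,1)$, $(4,3,6)$ has symmetric pair-sum array and yields a supermagic labeling of $C_3\Box C_3$ with constant $38$. So your framework is viable, but the valid arrays are less symmetric than your ansatz allows, and constructing them for every $n$ is the real work. For comparison: the paper quotes this theorem from Ivan\v{c}o, whose proof was existential, and its own machinery (Constructions~\ref{const:odd-odd} and~\ref{const:even-even} with Theorem~\ref{thm:main-odd-odd}) covers every $n=m\geq3$ by routing the edges into $d=\gcd(n,n)=n$ diagonals of length $2n$, labeling them in complementary pairs, and inserting two carefully engineered exceptional diagonals so that the seams between diagonals also receive the correct weight --- exactly the kind of delicate global assignment that your sketch leaves open.
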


\begin{theorem}\label{thm:ivanco-C_2m-C_2n}
	Let $n,m \geq 2$ be integers. Then the Cartesian product $C_{2n} \Box C_{2m}$ has a supermagic labeling.
\end{theorem}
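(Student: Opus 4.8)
Write the vertices of $G=C_{2n}\Box C_{2m}$ as pairs $(i,j)$ with $i\in\mathbb{Z}_{2n}$, $j\in\mathbb{Z}_{2m}$, and denote by $h_{i,j}$ the edge $(i,j)(i{+}1,j)$ and by $v_{i,j}$ the edge $(i,j)(i,j{+}1)$. Then $|E|=q=8nm$ and $|V|=4nm$. Summing all vertex weights counts every label twice, so $4nm\cdot c=2\cdot\frac{q(q+1)}{2}=q(q+1)$, forcing the magic constant to be $c=2(q+1)=16nm+2$. The plan is to realise this value everywhere by a complementation argument.

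First I would split $E$ into the two spanning subgraphs
\[
F=\{h_{i,j}:i+j\text{ even}\}\cup\{v_{i,j}:i+j\text{ odd}\},\qquad F'=E\setminus F.
\]
A one-line parity check shows that each vertex meets exactly one horizontal and one vertical edge of $F$ (and likewise of $F'$), so $F$ and $F'$ are $2$-factors. Let $\tau(i,j)=(-i,-j)$; this is an involutive automorphism of $G$, since it is the map $x\mapsto-x$ on the abelian group $\mathbb{Z}_{2n}\times\mathbb{Z}_{2m}$ and the connection set $\{(\pm1,0),(0,\pm1)\}$ is symmetric. Tracking the defining parity condition through $\tau$ turns ``$i+j$ even'' into ``$i+j$ odd'', whence $\tau(F)=F'$.

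Now comes the complementation. Choose any bijection $g\colon F\to\{1,\dots,4nm\}$ and set $f(e)=g(e)$ for $e\in F$ and $f(\tau(e))=8nm+1-g(e)$ for $e\in F$; since $g$ takes values in $\{1,\dots,4nm\}$ and $8nm+1-g(e)$ lies in $\{4nm+1,\dots,8nm\}$, this $f$ is a bijection $E\to\{1,\dots,8nm\}$. Writing $W(v)=\sum_{e\in F,\,v\in e}g(e)$ for the $F$-weight and using that the $F'$-edges at $v$ are exactly the $\tau$-images of the $F$-edges at $\tau(v)$ (recall $\tau=\tau^{-1}$), the weight of $v$ becomes
\[
w(v)=W(v)+\bigl(2(8nm+1)-W(\tau v)\bigr)=16nm+2+\bigl(W(v)-W(\tau v)\bigr).
\]
Thus $f$ is supermagic with constant $c$ as soon as $W(v)=W(\tau v)$ for every $v$; that is, it suffices to exhibit a bijection $g$ whose induced $F$-weight function is invariant under the point reflection $\tau$. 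Here $W(v)$ is just the sum of the labels of the unique horizontal and the unique vertical $F$-edge at $v$, and the symmetry is constrained only on the $2$-element orbits of $\tau$ (its four fixed points impose nothing).

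The remaining, and in my view hardest, step is the explicit construction of $g$. The structure of $F$ guides the choice: following its edges one sees that $F$ is a disjoint union of $\gcd(n,m)$ ``staircase'' cycles, each of the even length $4\,\lcm(n,m)$, winding diagonally around the torus. On each such even cycle one can lay down labels in a complementary zig-zag, pairing a small label with a large one at antipodal positions so that consecutive-edge sums are tightly controlled, and then distribute the blocks of $\{1,\dots,4nm\}$ over the $\gcd(n,m)$ cycles so that the resulting $F$-weights are matched under $\tau$. It is precisely the evenness of both factors that makes these diagonal cycles have even length and lets the antipodal pairing close up consistently, so I expect the construction to go through with a short case analysis on the parities of $n$ and $m$. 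Carrying out that bookkeeping and verifying $W\circ\tau=W$ is the main technical burden, whereas the reduction above is what makes the problem tractable.
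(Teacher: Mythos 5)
Your reduction is sound as far as it goes, but it is not a proof: the entire difficulty of the theorem is deferred to the one step you admit you have not carried out. Concretely, the forced constant $c=16nm+2$, the parity $2$-factorization $E=F\cup F'$, the fact that $\tau(i,j)=(-i,-j)$ is an automorphism with $\tau(F)=F'$, and the identity $w(v)=16nm+2+W(v)-W(\tau v)$ all check out. But the theorem now rests entirely on exhibiting a bijection $g\colon F\to\{1,\dots,4nm\}$ with $W\circ\tau=W$, and that is exactly where the real content lies. No injective labeling of a cycle can make all sums of consecutive edge labels equal (equal sums on an even cycle force the labels to alternate between two values), so each of the $\gcd(n,m)$ diagonal cycles of $F$ necessarily has at least one ``defective'' corner whose $W$-value differs from the generic one. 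Your $g$ must therefore distribute blocks of $\{1,\dots,4nm\}$ over the diagonals and place these unavoidable defects at $\tau$-compatible positions with equal defect values. Worse, $\tau$ does not permute the diagonals of $F$ at all (it maps them into $F'$), so the constraint $W(v)=W(\tau v)$ couples corners of \emph{different} $F$-diagonals, via the induced pairing $\delta\mapsto-\delta$ of diagonal indices; self-paired diagonals then need their single defect to sit at a $\tau$-fixed vertex, and whether that is even possible depends on where the four fixed points of $\tau$ fall. None of this is set up, let alone verified; ``I expect the construction to go through'' is the entire argument for the step that constitutes the proof.

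For comparison, the paper's proof of this statement (Construction~\ref{const:even-even} together with Theorem~\ref{thm:main-odd-odd}) consists precisely of the bookkeeping you skipped, done directly and without the reflection trick: odd diagonals get horizontal labels $(j-1)l+k$ (increasing) and vertical labels $2nm-(j-1)l-k+1$ (decreasing); even diagonals get the same labels with the horizontal ones cyclically shifted one position. Then every $HV$-corner of an odd diagonal and every $VH$-corner of an even diagonal has partial weight $2nm+1$, while at the seams from an even diagonal to the next odd one the aligned exceptional corners contribute $2nm+l$ against $2nm-l+2$ and the generic corners $2nm$ against $2nm+2$, so every vertex gets weight $4nm+2$ (which is $16nm+2$ in your normalization, since the paper's cycle lengths are your $2n$ and $2m$). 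The evenness of $d=\gcd$ is what lets the odd/even alternation of diagonals close up around the torus with no leftover diagonal. If you want to complete your route, this interleaved increasing/decreasing labeling with the one-position shift is a concrete template for the ``complementary zig-zag'' you gesture at; as it stands, your proposal is a correct reformulation of the problem, not a solution of it.
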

Ivan\v co also conjectured that the Cartesian product $C_{n} \Box C_{m}$ allows a supermagic labeling for any $n,m\geq 3$.

\begin{conj}\label{conj:ivanco}{\em (Ivan\v co)}
	The Cartesian product $C_{n} \Box C_{m}$ has a supermagic labeling for any $n,m\geq 3$.
\end{conj}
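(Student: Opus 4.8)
My plan is to verify the conjecture by splitting into cases according to the parities of $n$ and $m$, reducing as much as possible to the two theorems of Ivan\v co already available. Since $C_n\Box C_m$ is $4$-regular with $nm$ vertices and $2nm$ edges, summing the vertex weights counts every edge label twice, so the only possible magic constant is
\[
c=\frac{2\sum_{i=1}^{2nm} i}{nm}=\frac{2\cdot nm(2nm+1)}{nm}=4nm+2 ,
\]
an integer for all $n,m$; hence there is no counting obstruction. The edge set decomposes into a \emph{horizontal} $2$-factor consisting of $m$ disjoint copies of $C_n$ (fix the second coordinate) and a \emph{vertical} $2$-factor consisting of $n$ disjoint copies of $C_m$. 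Theorem~\ref{thm:ivanco-C_2m-C_2n} settles the case in which $n$ and $m$ are both even, and Theorem~\ref{thm:ivanco-C_n-C_n} settles $n=m$; so it remains to treat (i) $n$ odd and $m$ even with $n\neq m$, (ii) $n,m$ both odd and coprime, and (iii) $n,m$ both odd with $\gcd(n,m)>1$.

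The reason the odd cases resist the obvious approach is worth isolating first. For an even cycle one may label its edges alternately so that every vertex sees the same sum on its two incident cycle-edges; doing this on each horizontal and each vertical cycle would produce a \emph{decoupled} labeling in which the horizontal weight and the vertical weight are separately constant. On an odd cycle, however, constant two-edge sums force all edge labels to coincide, which is incompatible with a bijection. Thus for odd factors the two contributions must be genuinely coupled, and I would work directly on the torus $\mathbb{Z}_n\times\mathbb{Z}_m$. Writing $h(i,j)$ for the label of the edge $(i,j)(i+1,j)$ and $v(i,j)$ for that of $(i,j)(i,j+1)$, the magic condition at $(i,j)$ reads
\[
h(i-1,j)+h(i,j)+v(i,j-1)+v(i,j)=4nm+2 .
\]
The plan is to prescribe $h$ and $v$ by an affine, diagonally shifting pattern on $\mathbb{Z}_n\times\mathbb{Z}_m$ chosen so that this sum telescopes to a constant automatically, and then to fit the pattern to the exact target set $\{1,2,\dots,2nm\}$. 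Equivalently, I would first build a $\Gamma$-labeling with $\Gamma=\mathbb{Z}_{2nm}$ having constant vertex sums in the group and then lift it to a bijection onto $\{1,\dots,2nm\}$; the lifting is where the arithmetic of $n$ and $m$ enters.

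For case (iii) the common factor $d=\gcd(n,m)>1$ is exactly what makes a clean construction available. The diagonal translation $(i,j)\mapsto(i+1,j+1)$ partitions the torus into $d$ parallel diagonals; when $d>1$ there are several of them, and I would label them against one another, spreading $\{1,\dots,2nm\}$ across the diagonals so that the telescoping sum at every vertex comes out to $4nm+2$, checking the magic identity only once per diagonal class. Cases (i) and (ii) are where I expect the real difficulty. When $\gcd(n,m)=1$ the single diagonal is a Hamiltonian cycle of the torus --- equivalently, via the Chinese Remainder Theorem, $C_n\Box C_m$ is a circulant on $\mathbb{Z}_{nm}$ --- and the absence of a second diagonal to balance against makes it hard to reconcile the telescoping magic condition with the rigid requirement that the labels be exactly $\{1,\dots,2nm\}$. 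The mixed-parity case (i) is intermediate: the even factor can be decoupled by alternation, reducing matters to coupling the remaining odd factor, but it still demands a genuine construction. The coprime odd case is thus the crux of the conjecture, and should a uniform construction fail to materialise there, the argument still delivers the full result in case (iii) --- all odd $n,m$ with $\gcd(n,m)>1$ --- which is the portion I would record as proved.
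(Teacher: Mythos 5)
Your framing matches the paper's own approach point for point: the forced magic constant $c=4nm+2$, the decomposition of the torus into $d=\gcd(n,m)$ diagonals of length $2l$ with $l=\lcm(n,m)$, the observation that odd factors preclude a decoupled alternating labeling, and the honest retreat to case (iii), which is exactly the portion the paper proves (Theorem~\ref{thm:main}), with your cases (i) and (ii) being precisely the paper's concluding open problem. Your reductions of the even--even and $n=m$ cases to Theorems~\ref{thm:ivanco-C_2m-C_2n} and~\ref{thm:ivanco-C_n-C_n} are fine. But for case (iii) what you give is a plan, not a proof, and the step you compress into one sentence --- ``spreading $\{1,\dots,2nm\}$ across the diagonals so that the telescoping sum at every vertex comes out to $4nm+2$, checking the magic identity only once per diagonal class'' --- is exactly where the difficulty lives. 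The natural scheme (the one the paper uses in Construction~\ref{const:odd-odd}) labels odd-indexed diagonals with increasing horizontal and decreasing vertical runs of consecutive integers, giving partial weight $2nm+1$ at every $HV$-corner, and even-indexed diagonals with the horizontal labels cyclically shifted by one position, giving partial weight $2nm+1$ at every $VH$-corner; seam vertices between an odd diagonal and the following even one then automatically receive $4nm+2$. The obstruction your proposal never confronts is that for $n,m$ odd, $d$ is odd, so this two-coloring of the cyclic sequence $D^1,\dots,D^d$ cannot alternate consistently: $D^d$ and $D^1$ would both be of ``odd type.'' Moreover, the shift creates in each even diagonal one exceptional $HV$-corner of partial weight $2nm+l$, which must be made to coincide with the one exceptional $VH$-corner of weight $2nm-l+2$ in the adjacent odd diagonal; the exceptional corners must be \emph{aligned} vertex by vertex, which is a nontrivial positional constraint, not a per-class check.

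The paper closes this odd cycle of diagonals with machinery entirely absent from your sketch: two specially labeled diagonals. Writing $l=2l'+1$, the diagonal $D^{d-1}$ is labeled so that its exceptional $HV$-corner (weight $2nm+l$) is relocated to position $l'+2$, and the last diagonal $D^d$ is labeled by a step-two pattern ($f(h^d_k)=(d-1)l+2k-2$ on one block, $(d-2)l+2k-2$ on the other, and analogously for the vertical edges) interleaving two blocks of consecutive integers, so that $D^d$ carries an exceptional $VH$-corner of weight $2nm-l+2$ at position $l'+2$ (mating with $D^{d-1}$'s displaced exceptional $HV$-corner) and an exceptional $HV$-corner of weight $2nm+l$ at its first position, mating with the first $VH$-corner of $D^1$ --- which is why the paper starts $D^1$ at $x_{1(d+1)}$ rather than $x_{11}$. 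Verifying constancy then requires the corner-by-corner case analysis of Section~\ref{sec:D-construction} (roughly twenty displayed partial-weight computations), not a single check per diagonal class. So: your proposal correctly identifies the strategy, the crux, and the scope of what is provable, and agrees with the paper that (i) and (ii) remain open; but it has a genuine gap in that the construction resolving the parity obstruction among the $d$ diagonals --- the heart of the paper's contribution --- is missing, and without it even case (iii) is not established.
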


It turns out that  when the labeling is performed in a cyclic group $Z_{2nm}$ rather than in positive integers, than the conjecture is true. 

\begin{definition}\label{def:group-edge-laleling}
	A \emph{supermagic $\Gamma$-labeling} of a graph $G(V,E)$ with $|E|=k$ is a bijection $f$ from $E$ to an Abelian group $\Gamma$ of order $k$ such that the sum of labels of all incident edges of every vertex $x\in V$, called the \emph{weight} of $x$ and denoted $w(x)$, is equal to the same element $\mu \in \Gamma$, called the \emph{magic constant}. That is,
	$$
	w(x) =\sum_{y:xy\in E} f(xy) =\mu
	$$
	for every vertex $x\in V$.
\end{definition}

Froncek, McKeown, McKeown, and McKeown in~\cite{F-McK^3} proved the following.

\begin{theorem}\label{thm:Gamma}
	The Cartesian product $C_n \Box C_m$ admits a supermagic $Z_{2nm}$-labeling 
	for all $n,m\geq 3$.
\end{theorem}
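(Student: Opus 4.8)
The plan is to construct the $Z_{2nm}$-labeling explicitly by exploiting the product structure of $C_n \Box C_m$. I would index the vertices as $v_{i,j}$ with $i \in Z_n$ and $j \in Z_m$, so that each vertex has exactly four incident edges: two ``horizontal'' edges $v_{i,j}v_{i+1,j}$ (lying in copies of $C_n$) and two ``vertical'' edges $v_{i,j}v_{i,j+1}$ (lying in copies of $C_m$). The key idea is to label the $nm$ horizontal edges and the $nm$ vertical edges by two functions $h(i,j)$ and $t(i,j)$ into $Z_{2nm}$, where $h(i,j)$ is the label of $v_{i,j}v_{i+1,j}$ and $t(i,j)$ is the label of $v_{i,j}v_{i,j+1}$. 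The magic condition at $v_{i,j}$ then becomes
\begin{equation}
h(i,j)+h(i-1,j)+t(i,j)+t(i,j-1)=\mu
\end{equation}
for all $i,j$, and the bijectivity condition requires that $h$ and $t$ together range over all of $Z_{2nm}$.

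First I would try an ansatz that decouples the two directions, writing $h(i,j)=a(i)+\alpha(j)$ and $t(i,j)=b(j)+\beta(i)$ for suitable functions, or more flexibly a single affine-plus-correction form $h(i,j)=c_1 i + c_2 j + r(i,j)$. The cleanest approach is to pick a constant $\delta \in Z_{2nm}$ and define the horizontal labels so that $h(i,j)+h(i-1,j)$ equals a fixed value depending only on $j$ (say, by making $h$ alternate appropriately along $i$), and symmetrically for the vertical labels along $j$; then the four-term sum is constant. This is exactly where the parity of $n,m$ enters: an alternating pattern around a cycle $C_n$ closes up consistently only when $n$ is even, which is why Ivan\v{c}o's integer constructions needed even lengths, whereas in $Z_{2nm}$ we have an extra factor of $2$ and more room to absorb the discrepancy. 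I would therefore look for $h$ of the form $h(i,j) = \sigma i + \tau j + \varepsilon(i,j)$ where $\varepsilon$ is a small periodic correction chosen so that consecutive sums telescope to a constant.

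The bijectivity is the delicate bookkeeping step: I must verify that the combined multiset of values $\{h(i,j)\} \cup \{t(i,j)\}$ is exactly $Z_{2nm}$, each element once. The natural device is to arrange that the horizontal labels occupy one coset structure and the vertical labels the complementary one — for instance, having $h$ surject onto the even residues and $t$ onto the odd residues modulo $2$, or partitioning $Z_{2nm}$ via the Chinese Remainder decomposition when $\gcd(n,m)$ is favorable. Here the distinction with the companion integer theorem (Conjecture~\ref{conj:ivanco}) becomes invisible, since over the group every element is equally available and we need not worry about ordering $1,2,\dots,2nm$; we only need the set to be covered bijectively.

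The step I expect to be the main obstacle is simultaneously satisfying the constant-weight equation \emph{and} the global bijectivity, since these pull in opposite directions: enforcing the magic constant forces many linear relations among the labels, which tends to collapse their range, while bijectivity demands they spread out over all of $Z_{2nm}$. I would resolve this by choosing the linear coefficients $\sigma, \tau$ to be generators (or near-generators) of $Z_{2nm}$ so that the affine parts already sweep out the whole group, and then verifying that the correction terms $\varepsilon$ permute values within fibers without creating collisions. Concretely, I anticipate the cleanest choice is $h(i,j)=2(im+j)+1$-type odd labels and $t(i,j)=2(jn+i)$-type even labels (adjusted modulo $2nm$), after which the weight computation reduces to a short telescoping identity and bijectivity follows from coprimality of the stride with the modulus. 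The final write-up would present the explicit formulas, verify the four-term sum equals a single $\mu$ by direct substitution, and confirm coverage of $Z_{2nm}$ by a counting/coset argument.
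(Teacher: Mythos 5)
There is a genuine gap: your proposal is a strategy outline whose two load-bearing steps both fail as stated. First, the central mechanism --- choosing $h$ so that $h(i,j)+h(i-1,j)$ is a fixed value depending only on $j$, and symmetrically for $t$ --- is provably impossible when $n$ (or $m$) is odd, even working over $Z_{2nm}$. If $h(i,j)+h(i-1,j)=C(j)$ for all $i$, then $h(i+1,j)=C(j)-h(i,j)$, so the horizontal labels in column $j$ alternate between two values; going once around a cycle of odd length $n$ forces $2h(0,j)=C(j)$ and hence $h(i,j)=h(0,j)$ for every $i$, so all $n$ of these labels coincide and bijectivity is destroyed. The ``extra factor of $2$'' in $Z_{2nm}$ does not rescue this: the obstruction is the odd cycle length, not the size of the group. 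Second, your concrete fallback $h(i,j)=2(im+j)+1$, $t(i,j)=2(jn+i)$ does cover $Z_{2nm}$ bijectively, but it fails the magic condition: at a generic vertex $v_{i,j}$,
\[
h(i,j)+h(i-1,j)+t(i,j)+t(i,j-1)=4im+4jn+4i+4j-2m-2n+2,
\]
which visibly depends on $i$ and $j$ (incrementing $i$ changes it by $4m+4\not\equiv 0 \pmod{2nm}$). So neither the structural ansatz nor the explicit formula yields a proof, and the ``short telescoping identity'' you defer to does not exist for these labels.

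For comparison: the theorem is not proved in this paper at all --- it is quoted from the cited work~\cite{F-McK^3} --- but both that proof and the integer-valued Constructions~\ref{const:odd-odd} and~\ref{const:even-even} here rest on an idea your plan is missing. They abandon row/column decoupling and instead decompose the edge set into \emph{diagonals}: each diagonal is a cycle of length $2\,\lcm(n,m)$ alternating horizontal and vertical edges, and labels are assigned along diagonals so that the two-edge partial weights at $HV$-corners and $VH$-corners are \emph{not} individually constant but pair up across consecutive diagonals to a constant total (e.g., $2nm+l$ meeting $2nm-l+2$, and $2nm$ meeting $2nm+2$). This cross-diagonal compensation is exactly what circumvents the odd-parity obstruction your ansatz runs into; replacing the coordinate decoupling with a diagonal decomposition is the missing idea if you want to salvage the write-up.
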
 

We will show that Ivan\v co's conjecture is true for cycles of odd lengths that are not relatively prime by proving the following.

\begin{thm}\label{thm:main}
	Let $n,m$ be odd positive integers with $\gcd(n,m)>1$. Then there exists a supermagic labeling of the Cartesian product $C_n\Box C_m$ with the magic constant $c=4nm+2$.
\end{thm}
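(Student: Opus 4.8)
The plan is to build the integer labeling directly, guided by the structure that the common factor $d=\gcd(n,m)>1$ imposes on $C_n\Box C_m$. First I would record that the magic constant is forced: with $q=2nm$ edges and $nm$ vertices, summing all vertex weights counts each label twice, so $nm\,c=2\sum_{k=1}^{2nm}k=2nm(2nm+1)$ and hence $c=4nm+2$, exactly as claimed. I would then coordinatize $V=Z_n\times Z_m$, writing $h_{i,j}$ for the edge $(i,j)(i+1,j)$ and $v_{i,j}$ for $(i,j)(i,j+1)$, so that the weight condition at $(i,j)$ reads $g(h_{i,j})+g(h_{i-1,j})+g(v_{i,j})+g(v_{i,j-1})=4nm+2$. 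Since the labels are $\{1,\dots,2nm\}$ and $c=2(2nm+1)$, it suffices to arrange that the four labels at every vertex split into two complementary pairs $\{p,\,2nm+1-p\}$; equivalently, after the substitution $g\mapsto 2g-(2nm+1)$ the problem becomes a bijective labeling of the edges by the symmetric set $\{\pm1,\pm3,\dots,\pm(2nm-1)\}$ with all four labels at each vertex summing to $0$. I note that Theorem~\ref{thm:Gamma} alone will not suffice here: lifting a $Z_{2nm}$-labeling to $\{1,\dots,2nm\}$ leaves the per-vertex carries uncontrolled, and forcing every carry to equal $2$ is already equivalent to the full integer problem.

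The structural engine I would use is a decomposition of $E(C_n\Box C_m)$ into \emph{staircase} cycles. Traversing alternately one horizontal and one vertical edge produces the closed walk $(i,j)\to(i+1,j)\to(i+1,j+1)\to(i+2,j+1)\to\cdots$, which returns to its start after $\lcm(n,m)=nm/d$ diagonal steps; hence it is a genuine cycle of even length $2nm/d$, alternating horizontal and vertical edges. The invariant $(a-b)\bmod d$ is constant along the horizontal edges $h_{a,b}$ of such a walk, so these staircases fall into exactly $d$ classes $\mathcal S_0,\dots,\mathcal S_{d-1}$, partitioning all $2nm$ edges into $d$ even cycles of length $2nm/d$. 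The decisive point is the local picture: at a vertex $(i,j)$ with $(i-j)\equiv r\pmod d$, the edges $h_{i,j},v_{i,j-1}$ lie on $\mathcal S_r$ and are consecutive there, while $h_{i-1,j},v_{i,j}$ lie on $\mathcal S_{r-1}$ and are consecutive there. Thus for $d\ge2$ exactly two distinct staircases pass through each vertex, each contributing one consecutive edge-pair.

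Consequently the weight condition decouples into a compatibility condition across neighbouring staircases: writing $p^r(w)$ and $q^{r}(w)$ for the sums of the consecutive edge-pair that $\mathcal S_r$ places at a vertex $w$ where it turns from vertical to horizontal, respectively from horizontal to vertical, the requirement at every diff-$r$ vertex $w$ becomes simply $p^{r}(w)+q^{\,r-1}(w)=4nm+2$. I would satisfy this by labeling each even cycle $\mathcal S_r$ with a designated block of $2nm/d$ of the integers via an interleaved zigzag (small, large, next-small, next-large, $\dots$), which makes the consecutive pair-sums run through a controlled arithmetic pattern, and then choosing the block assigned to each $\mathcal S_r$, together with a starting edge and an orientation, so that the pair-sum sequence that $\mathcal S_{r-1}$ presents along the shared diff-$r$ vertices is the complement, with respect to $4nm+2$, of the one presented by $\mathcal S_r$. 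A global count confirms this is not obstructed on average: summing the condition over all vertices reduces to $2\sum_r\Sigma(\mathcal S_r)=2nm(2nm+1)$, where $\Sigma(\mathcal S_r)$ is the total of the labels on $\mathcal S_r$, which holds automatically.

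The main obstacle I anticipate is closing up this scheme consistently around the cyclic family $\mathcal S_0,\dots,\mathcal S_{d-1}$ while keeping the labeling bijective onto $\{1,\dots,2nm\}$. Because the complementation relates $\mathcal S_r$ to $\mathcal S_{r-1}$ for every $r\in Z_d$ and $d$ is odd, one cannot merely two-colour the staircases into low and high families; instead the per-cycle zigzag increments must be chosen so that their effect telescopes to zero around the odd cycle of length $d$, and the shift identifying the diff-$r$ vertices in the two cyclic orders in which $\mathcal S_r$ and $\mathcal S_{r-1}$ meet them must be tracked carefully. Verifying that the resulting $2nm$ labels are exactly $1,\dots,2nm$ is the real bookkeeping; the single weight identity is then routine. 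This is precisely where the hypothesis $d>1$ is used: when $\gcd(n,m)=1$ the construction degenerates, as the staircase walk of length $2nm$ revisits every vertex twice and no longer splits into two edges on distinct cycles, so both consecutive pairs at a vertex come from one closed walk and the compatibility can no longer be balanced, consistent with the theorem excluding the relatively prime case.
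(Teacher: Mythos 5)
Your framework is the same as the paper's: your staircases $\mathcal S_0,\dots,\mathcal S_{d-1}$ are exactly its diagonals $D^1,\dots,D^d$, your "consecutive edge-pair" at a vertex is its $HV$-/$VH$-corner, and your decoupled condition $p^r(w)+q^{r-1}(w)=4nm+2$ is precisely how its verification is organized. Your preliminary reductions are also correct (the forced value $c=4nm+2$, the signed reformulation), with one caveat: splitting the four labels at a vertex into two pairs each summing to $2nm+1$ is only a sufficient condition, not equivalent to the weight condition, and in fact the paper's own labeling violates it at most vertices (typical corner sums there are $2nm$ and $2nm+2$, or $2nm+l$ and $2nm-l+2$). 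The genuine gap is that the proposal stops exactly where the proof has to begin: you never exhibit the block assignments, starting edges, and orientations whose pair-sum sequences complement one another around the cyclic family of $d$ staircases while hitting each of $1,\dots,2nm$ exactly once; you only assert such choices exist, and you yourself concede that the odd value of $d$ obstructs the natural low/high alternation and that the bijectivity check is ``the real bookkeeping.'' That unperformed bookkeeping \emph{is} the theorem.

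The gap is substantive, not clerical, because no uniform per-cycle pattern can work. If a staircase with edges $h_1,v_1,h_2,v_2,\dots,h_l,v_l$ had all $HV$-sums $h_k+v_k=\alpha$ and all $VH$-sums $v_{k-1}+h_k=\beta$, then $h_{k+1}-h_k=\beta-\alpha$ for every $k$ cyclically, so $l(\beta-\alpha)=0$, forcing $\alpha=\beta$ and then $h_{k+1}=h_k$, contradicting injectivity. Hence every staircase must carry at least one exceptional corner sum, and the exceptional corners of neighbouring staircases must be made to land on the \emph{same} vertices so their deviations (here $+l$ and $-l$) cancel; arranging this coherently around an \emph{odd} cycle of staircases is the whole difficulty. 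The paper resolves it with four distinct labeling schemes — generic odd diagonals (all $HV$-sums $2nm+1$, one exceptional $VH$-sum $2nm-l+2$), generic even diagonals (one exceptional $HV$-sum $2nm+l$, all $VH$-sums $2nm+1$), a modified even diagonal $D^{d-1}$ whose exceptional corner is relocated to position $l'+2$, and a final odd diagonal $D^d$ labeled in steps of $2$ from two interleaved blocks — together with starting $D^1$ at the shifted vertex $x_{1(d+1)}$, all engineered so the exceptions align. None of that construction, nor any substitute for it, appears in the proposal, so what you have is a correct plan and diagnosis of the obstacle rather than a proof.
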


The construction is based on a similar one used for $Z_{2nm}$-labeling of $C_n\Box C_m$ in~\cite{F-MMcK} by Froncek and McKeown.

Since Ivan\v co's proof in~\cite{ivanco} was strictly existential and did not provide a construction of a particular labeling, we also present a labeling for the case covered by Theorem~\ref{thm:ivanco-C_2m-C_2n}.

For completeness we also mention that analogous results for {\em distance magic} and {\em $\Gamma$-distance magic} labeling of $C_n\Box C_m$ (where we label vertices rather than edges and the weight of a vertex $x$ is the sum of labels of all neighbors of $x$) were proved in~\cite{Rao}, \cite{fro2}, and~\cite{cichacz1}.
\section{Construction}\label{sec:D-construction}

We  use the following notation. Vertices will be denoted $x_{ij}$ for $i=1,2,\dots,n$ and $j=1,2,\dots,m$. Every vertex $x_{ij}$ is then incident with two \emph{vertical edges}  $x_{(i-1)j}x_{ij}$ and $x_{ij}x_{(i+1)j}$ and two \emph{horizontal edges}  $x_{i(j-1)}x_{ij}$ and $x_{ij}x_{i(j+1)}$ where the subscripts are computed modulo $n$ and $m$, respectively.

By a \emph{diagonal} $D^j$ we mean a cycle consisting alternately of horizontal and vertical edges containing the horizontal edge $x_{1j}x_{1(j+1)}$. 
%except the first diagonal $D^1$, which will start . 
%
More precisely, %{\bf wrong notation???}
$$
D^{j}=(x_{1j}x_{1(j+1)},x_{1(j+1)}x_{2(j+1)},x_{2(j+1)}x_{2(j+2)},x_{2(j+2)}x_{3(j+2)},\dots,x_{nj}x_{1j}).
$$ 

We observe that each diagonal $D^j$ is a cycle of length $2l$, where $l=\lcm(n,m)$. To see that, we notice that to get back to $x_{1j}$ we need to pass through $am$ horizontal edges and $bn$ vertical edges, where $a,b$ are positive integers. Because the number of horizontal and vertical edges in the diagonal is the same, we must have $am=bn=\lcm(n,m)$ and the conclusion follows. Since each diagonal has $2l$ edges, the number of diagonals, call it $d$, is $d=2nm/2l=nm/l=\gcd(n,m)$.

To avoid complicated notation in vertex subscripts, we just denote the horizontal edges in $D^j$ consecutively as $h^j_1,h^j_2,\dots,h^j_{l}$ and the vertical ones by $v^j_1,v^j_2,\dots,v^j_{l}$. All diagonals will start at vertex $x_{1j}$, that is, 
$h^j_1=x_{1j}x_{1(j+1)},v^j_1=x_{1(j+1)}x_{2(j+1)}$ and so on. 
We will also call the pair of consecutive edges $(h^j_k,v^j_k)$ the \emph{$HV$-corner} and the pair  $(v^j_{k-1},h^j_{k})$ the \emph{VH-corner}. In particular, the pairs will be called the $k$-th $HV$-corner and $k$-th $VH$-corner, respectively. Note that the first $HV$-corner is $(h^j_1,v^j_1)$ while the first $VH$-corner is $(v^j_l,h^j_1)$.

\begin{construction}\label{const:odd-odd}
We construct a labeling $f$ of $C_n\Box C_m$ for $3\leq n\leq m$, where $n,m$ are odd and $\gcd(n,m)>1$.
\end{construction} 

\noindent
Since $n,m$ are odd, we have both $l$ and $d$ odd and because we have assumed that $n$ and $m$ are co-prime, we have $d\geq3$. We set $l=2l'+1$.

We label each diagonal $D^j$ for $j=1,2,\dots,d-1$ alternately with two sets of $l$ consecutive integers. The horizontal edges will be labeled in increasing order and the vertical ones in decreasing order. The last diagonal $D^d$ will be labeled differently.

All diagonals except  $D^1$ will start at vertex $x_{1j}$. Hence, we have 
$h^j_1=x_{1j}x_{1(j+1)},v^j_1=x_{1(j+1)}x_{2(j+1)}$ and so on. For $D^1$, we start at vertex $x_{1(d+1)}$, obtaining $h^1_1=x_{1(d+1)}x_{1(d+2)},v^1_1=x_{1(d+2)}x_{2(d+2)}$ and so on.
Recall that the first $HV$-corner is $(h^j_1,v^j_1)$ while the first $VH$-corner is $(v^j_l,h^j_1)$.

The odd diagonals except the last one will be labeled as follows.

\begin{equation*}\label{eq:D^j-odd}
 D^j \ \text{for}\ j\ \text{odd}, \ j<d\ 
\begin{cases}
f(h^j_k)= (j-1)l + k\\
\\
f(v^j_k)= 2nm - (j-1)l -k +1\\
\end{cases}
\end{equation*}

%\vskip10mm
 
Now we look at the partial weights created by the labels in each $D^j$. At the $k$-th $HV$-corner, we have the partial weight $w^j_{HV}$ of the appropriate vertex of $D^j$ equal to 
\begin{align}\label{eq:D^j-odd-HV-k}
	w^j_{HV}(x_{st})	&= f(h^j_k)+f(v^j_k)\nonumber\\
					&= ((j-1)l + k) + (2nm - (j-1)l -k +1) = 2nm+1.
\end{align}
At the first $VH$ corner for $j=1$ we have
\begin{align}\label{eq:D^j-odd-VH-first-j=1}
w^1_{VH}(x_{1(d+1)})&= f(v^1_l) + f(h^1_{1})\nonumber\\
					&= (2nm - l +1) + 1 = 2nm-l+2,
\end{align}
and for $3\leq j\leq d-2$ we have
\begin{align}\label{eq:D^j-odd-VH-first}
w^j_{VH}(x_{1j})		&= f(v^j_l) + f(h^j_{1})\nonumber\\
&= (2nm - (j-1)l -l +1) + ((j-1)l + 1) = 2nm-l+2
\end{align}
as well.
 At the $k$-th $VH$-corner for $2\leq k\leq l$ the partial weight $w^j_{VH}$ is
\begin{align}\label{eq:D^j-odd-VH-k>1}
w^j_{VH}(x_{st})	&= f(v^j_{k-1}) + f(h^j_{k})\nonumber\\
					&= (2nm - (j-1)l -(k-1) +1) + ((j-1)l + k) = 2nm+2,
\end{align}
regardless of the value of $j$. 

Now we want to label the even diagonals so that their $VH$-corners would have partial weights equal to $2nm+1$, the weights of $HV$-corners would be $2nm$ or $2nm+l$ (at one ``exceptional'' corner) and the vertices with exceptional weights would be aligned. To do so, we need to shift the horizontal edge labels by one position down, while labeling the vertical edges as before. Namely, for $j<d-1$ we have
\begin{equation*}\label{eq:D^j-even}
D^j \ \text{for}\ j\ \text{even} \
\begin{cases}
f(h^j_1)= (j-1)l + l=jl\\
%\\
f(h^j_k)= (j-1)l + k-1 \ \text{for}\ k>1\\
%\\
f(v^j_k)= 2nm - (j-1)l -k +1\\
\end{cases}
\end{equation*}
The partial weights at the first $HV$-corner are  then 
\begin{align}\label{eq:D^j-even-HV-first}
w^j_{HV}(x_{1(j+1)})	&= f(h^j_1)+f(v^j_1)\nonumber\\
				&= ((j-1)l + l) + (2nm - (j-1)l) = 2nm+l
\end{align}
and at the $k$-th $HV$-corner for $k>1$ we have
\begin{align}\label{eq:D^j-even-HV-k>1}
w^j_{HV}(x_{st})	&= f(h^j_k)+f(v^j_k)\nonumber\\
				&= ((j-1)l + k-1) + (2nm - (j-1)l -k +1) = 2nm.
\end{align}
At the $k$-th $VH$-corner the partial weight $w^j_{VH}$ is
\begin{align}\label{eq:D^j-even-VH-k}
w^j_{VH}(x_{st})	&= f(v^j_{k-1}) + f(h^j_{k})\nonumber\\
					&= (2nm - (j-1)l -(k-1) +1) + ((j-1)l + k-1) = 2nm+1
\end{align}
for any value of $j$. The last even diagonal $D^{d-1}$ is labeled in a similar way except that the vertex with exceptional partial weight is shifted. Namely, we have
\begin{equation*}\label{eq:D^j-(d-1)}
%
%D^j \ \text{for}\ j=d-1 \
D^{d-1} \ 
\begin{cases}
f(h^{d-1}_k)= (d-2)l + k+l'-1 \ \text{for}\ k\leq l'+2\\
%\\
f(h^{d-1}_k)= (d-2)l + k-l'-2 \ \text{for}\ k\geq l'+3\\
%\\
f(v^{d-1}_k)= 2nm - (d-2)l -k-l' +1 \ \text{for}\ k\leq l'+1\\
%\\
f(v^{d-1}_k)= 2nm - (d-2)l -k+l'+2 \ \text{for}\ k\geq l'+2\\
\end{cases}
\end{equation*}
The partial weights at the first $l'+1$ $HV$-corners are   
\begin{align}\label{eq:D^{d-1}-HV-first}
w^{d-1}_{HV}(x_{st})	&= f(h^{d-1}_k)+f(v^{d-1}_k)\nonumber\\
&= ((d-2)l+k+l'-1) + (2nm-(d-2)l -k-l'+1) = \nonumber\\
&= 2nm
\end{align}
and at the $(l'+2)$-nd $HV$-corner we get the exceptional weight 
\begin{align}\label{eq:D^{d-1}-HV-k=l'+2}
w^{d-1}_{HV}(x_{st})	&= f(h^{d-1}_{l'+2})+f(v^{d-1}_{l'+2})\nonumber\\
&= ((d-2)l+(l'+2) +l'-1) + (2nm-(d-2)l -(l'+2)+l'+2) = \nonumber\\
&= ((d-2)l+l) + (2nm-(d-2)l) = \nonumber\\
&= 2nm+l.
\end{align}
At the remaining $HV$-corners for $k=l'+3,l'+4,\dots,l$ the weights are again
\begin{align}\label{eq:D^{d-1}-HV-last}
w^{d-1}_{HV}(x_{st})	&= f(h^{d-1}_{l'+2})+f(v^{d-1}_{l'+2})\nonumber\\
&= ((d-2)l+k-l'-2) + (2nm-(d-2)l -k+l'+2) = \nonumber\\
&= 2nm.
\end{align}

At the $VH$-corners, the partial weights $w^{d-1}_{VH}$ are as follows. 
For $k=1$ we have
\begin{align}\label{eq:D^{d-1}-VH-k=1}
w^{d-1}_{VH}(x_{1(d-1)})	&= f(v^{d-1}_{l}) + f(h^{d-1}_{1})\nonumber\\
&=  (2nm - (d-2)l -l+l'+2) +((d-2)l + 1+l'-1) \nonumber\\
&= 2nm-l+2l'+2 =2nm+1
\end{align}
and for $k=2,3,\dots,l'+2$ we have
\begin{align}\label{eq:D^{d-1}-VH-first}
w^{d-1}_{VH}(x_{st})	&= f(v^{d-1}_{k-1}) + f(h^{d-1}_{k})\nonumber\\
&= (2nm - (d-2)l -(k-1)-l' +1) + ((d-2)l + k+l'-1) \nonumber\\
&= 2nm+1.
\end{align}
For $k=l'+3,l'+4,\dots,l$,
\begin{align}\label{eq:D^{d-1}-VH-last}
w^{d-1}_{VH}(x_{st})&= f(v^{d-1}_{k-1}) + f(h^{d-1}_{k})\nonumber\\
					&=  (2nm - (d-2)l -(k-1)+l'+2) +((d-2)l + k-l'-2) \nonumber\\
					&= 2nm+1
\end{align}
as in all previous cases.

The remaining odd diagonal, $D^d$, is labeled in a different way. We define
\begin{equation*}\label{eq:D^j-d}
%
%D^j \ \text{for}\ j=d-1 \
D^{d} \ 
\begin{cases}
f(h^{d}_1)= dl \\
%\\
f(h^{d}_k)= (d-1)l + 2k-2 \ \text{for}\ 2\leq k\leq l'+1\\
%\\
f(h^{d}_k)= (d-2)l + 2k-2 \ \text{for}\ k\geq l'+2\\
%\\
f(v^{d}_k)= 2nm - (d-1)l -2k+2 \ \text{for}\ k\leq l'+1\\
%\\
f(v^{d}_k)= 2nm - (d-2)l - 2k+2 \ \text{for}\ k\geq l'+2\\
\end{cases} 
\end{equation*}
The partial weight at the first  $HV$-corner is 
\begin{align}\label{eq:D^d-HV-first}
w^{d}_{HV}(x_{1(d+1)})	&= f(h^{d}_1)+f(v^{d}_1)\nonumber\\
&= dl + (2nm - (d-1)l -2+2) = \nonumber\\
&= 2nm +l, 
\end{align}
and at the $k$-th $HV$-corner for $k=2,3,\dots, l'+1$ as 
\begin{align}\label{eq:D^d-HV-k<l'+2}
w^{d}_{HV}(x_{st})	&= f(h^{d}_{k})+f(v^{d}_{k})\nonumber\\
&= ((d-1)l + 2k-2) + (2nm - (d-1)l -2k+2) = \nonumber\\
&= 2nm.
\end{align}
At the remaining $HV$-corners for $k=l'+2,l'+3,\dots,l$ the weights are  
\begin{align}\label{eq:D^d-HV-last}
w^{d}_{HV}(x_{st})	&= f(h^{d}_{k})+f(v^{d}_{k})\nonumber\\
&= ((d-2)l + 2k-2) + (2nm - (d-2)l -2k+2) = \nonumber\\
&= 2nm
\end{align}
as well.
At the $VH$-corners, the partial weights $w^{d}_{VH}$ are as follows. 
For $k=1$ we have
\begin{align}\label{eq:D^d-VH-k=1}
w^{d}_{VH}(x_{1d})	&= f(v^j_{l}) + f(h^j_{1})\nonumber\\
&=  (2nm - (d-2)l -2l+2) +dl \nonumber\\
&= 2nm+2
\end{align}
and for $k=2,3,\dots,l'+1$ we have
\begin{align}\label{eq:D^d-VH-first}
w^{d}_{VH}(x_{st})	&= f(v^d_{k-1}) + f(h^d_{k})\nonumber\\
&= (2nm - (d-1)l -2(k-1)+2) + ((d-1)l + 2k-2) \nonumber\\
&= 2nm+2
\end{align}
as well. For $k=l'+2$ we obtain the exceptional weight
\begin{align}\label{eq:D^d-VH-k=l'+2}
w^{d}_{VH}(x_{st})	&= f(v^d_{l'+1}) + f(h^d_{l'+2})\nonumber\\
&= (2nm - (d-1)l -2(l'+1)+2) + ((d-2)l + 2(l'+2)-2) \nonumber\\
&= (2nm - (d-1)l -2l') + ((d-2)l + 2l'+2) \nonumber\\
&= 2nm-l+2
\end{align}
and for $k=l'+3,l'+4,\dots,l$,
\begin{align}\label{eq:D^d-VH-last}
w^{d}_{VH}(x_{st})	&= f(v^d_{k-1}) + f(h^d_{k})\nonumber\\
&=  (2nm - (d-2)l - 2(k-1)+2) +((d-2)l + 2k-2) \nonumber\\
&= 2nm+2
\end{align}
as in all previous cases except $k=l'+2$.
\qed

\vskip10pt

The construction for $n,m$ even is based on the same idea and is actually much simpler.

\begin{construction}\label{const:even-even}
	We construct a labeling $f$ of $C_n\Box C_m$ for $4\leq n\leq m$, where $n,m$ are even.
\end{construction} 

Because we have $n,m$ both even, the number of diagonals $d=\gcd(n,m)$ is also even. This fact simplifies matters a lot. 
The construction is very similar to the previous one. The only difference is that because we have an even number of diagonals, we will not need the exceptional one.

We label all odd diagonals as follows, which is the same as the first odd diagonals in Construction~\ref{const:odd-odd}. The only difference is that we start each diagonal $D^j$ at $x_{1j}$, including $D^1$, which will then of course start at $x_{11}$.

\begin{equation*}\label{eq:even_D^j-odd}
	D^j \ \text{for}\ j\ \text{odd}, \ j<d\ 
	\begin{cases}
		f(h^j_k)= (j-1)l + k\\
		\\
		f(v^j_k)= 2nm - (j-1)l -k +1\\
	\end{cases}
\end{equation*}

The partial weights at the  $HV$-corners are by~(\ref{eq:D^j-odd-HV-k}) again equal to  
\begin{align}\label{eq:even_D^j-odd-HV-k}
	w^j_{HV}(x_{st})	&=  2nm+1.
\end{align}
At the first $VH$ corner for we have %by~(\ref{eq:D^j-odd-VH-first-j=1})
\begin{align}\label{eq:even_D^j-odd-VH-first}
w^j_{VH}(x_{1j})		&= f(v^j_l) + f(h^j_{1})\nonumber\\
&= (2nm - (j-1)l -l +1) + ((j-1)l + 1) = 2nm-l+2
\end{align}
for every odd $j$.
At the $k$-th $VH$-corner for $2\leq k\leq l$ the partial weight $w^j_{VH}$ is by~(\ref{eq:D^j-odd-VH-first}) 
\begin{align}\label{eq:even_D^j-odd-VH-k>1}
	w^j_{VH}(x_{st})	&=  2nm+2
\end{align}
for every odd  $j$. 

We again label the even diagonals so that their $VH$-corners  have partial weights equal to $2nm+1$, the weights of $HV$-corners are  $2nm$ or $2nm+l$ (at the ``exceptional'' corner) and the vertices with exceptional weights match. 
\begin{equation*}\label{eq:even_D^j-even}
	D^j \ \text{for}\ j\ \text{even} \
	\begin{cases}
		f(h^j_1)= (j-1)l + l=jl\\
		%\\
		f(h^j_k)= (j-1)l + k-1 \ \text{for}\ k>1\\
		%\\
		f(v^j_k)= 2nm - (j-1)l -k +1\\
	\end{cases}
\end{equation*}
The partial weights at the first $HV$-corner are  then~(\ref{eq:D^j-even-HV-first})
\begin{align}\label{eq:even_D^j-even-HV-first}
	w^j_{HV}(x_{1(j+1)})	&=  2nm+l
\end{align}
as in Observation~\ref{const:odd-odd}  and similarly at the $k$-th $HV$-corner for $k>1$ we have by~(\ref{eq:D^j-even-HV-k>1})
\begin{align}\label{eq:even_D^j-even-HV-k>1}
	w^j_{HV}(x_{st})	&=  2nm.
\end{align}
At the $k$-th $VH$-corner the partial weight $w^j_{VH}$ is
\begin{align}\label{eq:even_D^j-even-VH-k}
	w^j_{VH}(x_{st})	&=  2nm+1
\end{align}
for any even value of $j$, as follows from~(\ref{eq:D^j-even-VH-k}). 
\qed
\section{Main Result}\label{sec:proof}

Now we are ready to prove our result.

\begin{thm}\label{thm:main-odd-odd}
	Let $n,m$ be positive integers of the same parity with $\gcd(n,m)>1$. Then the labeling $f$ described in {Constructions~\ref{const:odd-odd}} and~{\ref{const:even-even}} is a supermagic labeling of the Cartesian product $C_n\Box C_m$ with the magic constant $c=4nm+2$.
\end{thm}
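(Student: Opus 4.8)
The plan is to check the two defining properties of a supermagic labeling separately: that $f$ is a bijection from the edge set onto $\{1,2,\dots,2nm\}$, and that every vertex has weight exactly $4nm+2$. Throughout I would work with the decomposition of the edge set into the $d=\gcd(n,m)$ diagonals $D^{1},\dots,D^{d}$, each a cycle of length $2l$ with $l=\lcm(n,m)$, and I would use repeatedly the identity $dl=\gcd(n,m)\cdot\lcm(n,m)=nm$.

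For the bijection I would show that the horizontal labels placed on $D^{j}$ exactly fill the block $\{(j-1)l+1,\dots,jl\}$ while the vertical labels fill the block $\{2nm-jl+1,\dots,2nm-(j-1)l\}$. For the ``plain'' diagonals this is immediate from the defining formulas. For the diagonals carrying interleaved formulas (namely $D^{d-1}$ and $D^{d}$ in Construction~\ref{const:odd-odd}) I would split into the stated ranges of $k$ and verify that the two arithmetic progressions involved (of step $1$ or step $2$, the latter hitting the two parity classes) together exhaust the claimed block without repetition. Summing over $j=1,\dots,d$, the horizontal blocks tile $\{1,\dots,nm\}$ and the vertical blocks tile $\{nm+1,\dots,2nm\}$, so $f$ is a bijection onto $\{1,\dots,2nm\}$.

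The weight computation rests on one structural observation: every vertex $x_{ij}$ is incident with exactly four edges, and of the two diagonals through it, one contributes an $HV$-corner and the other a $VH$-corner (locally one diagonal turns from a horizontal to a vertical edge at $x_{ij}$, the other from vertical to horizontal). Hence $w(x_{ij})$ is the $HV$-partial-weight of one diagonal plus the $VH$-partial-weight of the other. Tracking row minus column modulo $d$, the two diagonals meeting at $x_{ij}$ carry consecutive labels $a$ and $a+1 \pmod d$, with the $HV$-corner belonging to $D^{a}$. It then suffices to check, for every $a$, that the generic $HV$-weight of $D^{a}$ plus the generic $VH$-weight of $D^{a+1}$ equals $4nm+2$; reading off the partial weights already computed in Construction~\ref{const:odd-odd}, all cases (for $a$ odd, $a$ even, $a=d-1$, and the wrap-around $a=d$) give $4nm+2$. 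Here the specially labeled $D^{d}$, which behaves like an even diagonal on its $HV$-corners but like an odd diagonal on its $VH$-corners, is exactly what makes the pairs $(d-1,d)$ and $(d,1)$ balance, since with $d$ odd the cycle of diagonals cannot be two-colored consistently.

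The remaining, and main, obstacle is the exceptional corners, whose partial weights are $2nm+l$ (certain $HV$-corners) and $2nm-l+2$ (certain $VH$-corners). Since $(2nm+l)+(2nm-l+2)=4nm+2$ but any pairing of an exceptional with a generic corner fails, I must prove that each exceptional $HV$-corner lies at the same vertex as an exceptional $VH$-corner. For this I would use the explicit positions: the $k$-th $HV$-corner of $D^{j}$ sits at $x_{k,\,j+k}$ and the $k$-th $VH$-corner at $x_{k,\,j+k-1}$ (rows mod $n$, columns mod $m$). Substituting the indices of the exceptional corners, I would verify that the exceptional $HV$-corner of each even $D^{j}$ coincides with the first (exceptional) $VH$-corner of $D^{j+1}$, that the shifted start of $D^{1}$ at $x_{1(d+1)}$ places its exceptional $VH$-corner precisely on the exceptional $HV$-corner of $D^{d}$, and that the displaced exceptional corners of $D^{d-1}$ and $D^{d}$ (both at index $l'+2$) meet at the common vertex $x_{l'+2,\,d+l'+1}$. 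A count showing that the numbers of exceptional $HV$- and $VH$-corners agree, together with this co-location, then forces weight $4nm+2$ at every vertex and completes the proof. The even case follows the same scheme, and is genuinely simpler: with $d$ even no parity seam arises, so no special diagonal is required and only the generic matching and the alignment of the first-corner exceptional weights need to be checked.
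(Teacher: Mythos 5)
Your proposal is correct and follows essentially the same route as the paper: decompose each vertex weight as the $HV$-partial weight of one diagonal plus the $VH$-partial weight of the next, plug in the partial weights computed in the constructions, and check that the exceptional corners (weights $2nm+l$ and $2nm-l+2$) land on the same vertices. Your explicit verification that the label blocks tile $\{1,\dots,2nm\}$ and your coordinate formulas $x_{k,\,j+k}$, $x_{k,\,j+k-1}$ for the corners make explicit two points the paper leaves implicit in the constructions, but the core argument is the same.
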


\begin{proof}	
First we observe that every vertex $x_{st}$ of $C_n\Box C_m$ belongs to two consecutive diagonals, say $D^j$ and $D^{j+1}$. Moreover, the edges 
$x_{s(t-1)}x_{st}$ and $x_{st}x_{(s+1)t}$ form a $k$-th $HV$-corner in $D^j$ for some $k$ while
the edges 
$x_{(s-1)t}x_{st}$ and $x_{st}x_{s(t+1)}$ form a $k$-th $VH$-corner in $D^{j+1}$.
It follows that the weight $w(x_{st})$ is the sum of the partial weights, that is,
\begin{equation*}
	w(x_{st}) = w^j_{HV}(x_{st}) + w^{j+1}_{VH}(x_{st}). 
\end{equation*}	
First we look at the case of $n,m$ odd. 
In Construction~\ref{const:odd-odd}, when $j$ is odd and $j\leq d-2$, it follows from~(\ref{eq:D^j-odd-HV-k})	that $w^j_{HV}(x_{st})$ in diagonal $D^j$ is always equal to $2nm+1$. Similarly, it follows from~(\ref{eq:D^j-even-VH-k}) for 
$2\leq j+1\leq d-3$ and from~(\ref{eq:D^{d-1}-VH-k=1}),~(\ref{eq:D^{d-1}-VH-first}), 
%~(\ref{eq:D^{d-1}-VH-k=l'+2}), 
and~(\ref{eq:D^{d-1}-VH-last}) for $j+1=d-1$ that $w^{j+1}_{VH}(x_{st})$ is always equal to $2nm+1$ as well. Therefore, for $j$ odd, $1\leq j\leq d-2$ we have
\begin{align*}
w(x_{st}) 	&= w^j_{HV}(x_{st}) + w^{j+1}_{VH}(x_{st})\nonumber\\
			&=(2nm+1)+(2nm+1)=4nm+2. 
\end{align*}		
For the remaining odd value of $j=d$ we have diagonal $D^d$ and the following diagonal is $D^1$. At the first $HV$-corner of $D^d$ we have from~(\ref{eq:D^d-HV-first}) $w^{d}_{HV}(x_{1(d+1)})=2nm+l$ and from~(\ref{eq:D^j-odd-VH-first}) we have $w^1_{VH}(x_{1(d+1)})=2nm-l+2$ (recall that diagonal $D^1$ starts at vertex $x_{1(d+1)}$). Hence,
\begin{align*}
w(x_{1(d+1)}) 	&= w^d_{HV}(x_{1(d+1)}) + w^{1}_{VH}(x_{1(d+1)})\nonumber\\
				&=(2nm+l)+(2nm-l+2)=4nm+2. 
\end{align*}		
At the remaining $HV$-corners for $k=2,3,\dots,l$ we have $w^{d}_{HV}(x_{rs})=2nm$  from~(\ref{eq:D^d-HV-k<l'+2}) and~(\ref{eq:D^d-HV-last}) and $w^1_{VH}(x_{rs})=2nm+2$ 
from~(\ref{eq:D^j-odd-VH-k>1}). Hence,
\begin{align*}
w(x_{rs}) 	&= w^d_{HV}(x_{rs}) + w^{1}_{VH}(x_{rs})\nonumber\\
&=(2nm)+(2nm+2)=4nm+2. 
\end{align*}		

Next we look at weights of vertices that belong to consecutive diagonals $D^j$ and $D^{j+1}$ for $j$ even, $2\leq j\leq d-3$, provided $d>3$ and such diagonals exist. The partial weight at the first $HV$-corner is $w^j_{HV}(x_{1(j+1)})=2nm+l$, as follows from~(\ref{eq:D^j-even-HV-first}), while from~(\ref{eq:D^j-odd-VH-first}) we have  $w^j_{VH}(x_{1(j+1)})=2nm-l+2$.	
Therefore,	
\begin{align*}
w(x_{1(j+1)}) 	&= w^j_{HV}(x_{1(j+1)}) + w^{j+1}_{VH}(x_{1(j+1)})\nonumber\\
				&=(2nm+l)+(2nm-l+2)=4nm+2. 
\end{align*}		
For the remaining vertices from~(\ref{eq:D^j-even-HV-k>1}) we have $w^j_{HV}(x_{st})=2nm$  and from~(\ref{eq:D^j-odd-VH-k>1}) $w^{j+1}_{VH}(x_{1(j+1)})=2nm+2$, and
\begin{align*}
w(x_{st}) 	&= w^j_{HV}(x_{st}) + w^{j+1}_{VH}(x_{st})\nonumber\\
			&=(2nm)+(2nm+2)=4nm+2. 
\end{align*}		
Finally, we examine weights of vertices that belong to diagonals $D^{d-1}$ and $D^{d}$. 
For the $(l'+2)$-nd $HV$-corner, we have $w^{d-1}_{HV}(x_{st})=2nm+l$ from~(\ref{eq:D^{d-1}-HV-k=l'+2}) and $w^{d}_{VH}(x_{st})=2nm-l+2$	from~(\ref{eq:D^d-VH-k=l'+2}), and
\begin{align*}
w(x_{st}) 	&= w^{d-1}_{HV}(x_{st}) + w^{d}_{VH}(x_{st})\nonumber\\
			&=(2nm+l)+(2nm-l+2)=4nm+2. 
\end{align*}		
For the remaining vertices we have at $HV$-corners $w^{d-1}_{HV}(x_{st})=2nm$, as follows from~(\ref{eq:D^{d-1}-HV-first}) and~(\ref{eq:D^{d-1}-HV-last}), while from~(\ref{eq:D^d-VH-k=1}),~(\ref{eq:D^d-VH-first}) and~(\ref{eq:D^d-VH-last}) we have  $w^d_{VH}(x_{st)})=2nm+2$.	
This yields	
\begin{align*}
w(x_{st}) 	&= w^{d-1}_{HV}(x_{st}) + w^{d}_{VH}(x_{st})\nonumber\\
			&=(2nm)+(2nm+2)=4nm+2. 
\end{align*}		

This concludes the case of $n,m$ both odd, as the weight s of all vertices have been verified.

The case of $n,m$ even is simpler. In Construction~\ref{const:even-even}
when $j$ is odd, it follows from~(\ref{eq:even_D^j-odd-HV-k})	that $w^j_{HV}(x_{st})$ in diagonal $D^j$ is always equal to $2nm+1$. Similarly, it follows from~(\ref{eq:even_D^j-even-VH-k}) for 
$2\leq j+1\leq d$ that $w^{j+1}_{VH}(x_{st})$ is always equal to $2nm+1$ as well. Therefore, for $j$ odd, $1\leq j\leq d-1$ we have
\begin{align*}
w(x_{st}) 	&= w^j_{HV}(x_{st}) + w^{j+1}_{VH}(x_{st})\nonumber\\
&=(2nm+1)+(2nm+1)=4nm+2. 
\end{align*}

Finally, we verify the weights of vertices at the seams between $D^j$ and $D^{j+1}$ for $j$ even. This also includes the case when $j=d$ and $j+1=1$.

From~(\ref{eq:even_D^j-even-HV-first}) it follows that the  partial weight at the first $HV$-corner is given as $w^j_{HV}(x_{1(j+1)})=2nm+l$ and 
from~(\ref{eq:even_D^j-odd-VH-first}) we have  the corresponding partial weight $w^j_{VH}(x_{1(j+1)})=2nm-l+2$.	
Therefore,	
\begin{align*}
w(x_{1(j+1)}) 	&= w^j_{HV}(x_{1(j+1)}) + w^{j+1}_{VH}(x_{1(j+1)})\nonumber\\
&=(2nm+l)+(2nm-l+2)=4nm+2. 
\end{align*}		
For the remaining vertices we have $w^j_{HV}(x_{st})=2nm$ from~(\ref{eq:even_D^j-even-HV-k>1}) and from~(\ref{eq:even_D^j-odd-VH-k>1}) we have $w^{j+1}_{VH}(x_{1(j+1)})=2nm+2$. Hence,
\begin{align*}
w(x_{st}) 	&= w^j_{HV}(x_{st}) + w^{j+1}_{VH}(x_{st})\nonumber\\
&=(2nm)+(2nm+2)=4nm+2. 
\end{align*}		
This concludes the case of $n,m$ both even.

We have now  computed weights of all vertices of our graph and concluded that $w(x_{st})=4nm+2$ for every vertex $x_{st}$, which completes the proof.	
\end{proof}

Our result then reduces the original conjecture by Ivan\v{c}o to the following open problem.

\begin{oprb}
	Does there exist a supermagic labeling of the Cartesian product $C_n\Box C_m$ for $n$ odd and $m$ even 
	%or for $n,m$ odd that are relatively prime?
	or for relatively prime odd numbers $n,m$?
\end{oprb}

We concur with Ivan\v co that the answer is affirmative. Our belief is supported by the existence of supermagic labelings of $C_3\Box C_4, C_3\Box C_5$, and $C_3\Box C_6$ found computationally by T. Michna.
~\cite{comput}.
%\newpage

\vskip1cm

\noindent

%\newpage

%% \bibitem must have the following form:
%%   \bibitem{key}...
%%

% \bibitem{}

% \end{thebibliography}

\end{document}